\documentclass[a4paper, 12pt, reqno]{amsart}

\usepackage{amsmath}
\usepackage{amsthm}
\usepackage{amssymb}
\usepackage{setspace}
\usepackage{enumerate}
\usepackage{comment}
\usepackage{bm}
\usepackage[initials, nobysame]{amsrefs}
\renewcommand{\MR}[1]{}
\makeatletter
\DeclareRobustCommand{\mrnumber}[1]{}
\makeatother

\newtheorem{thm}{Theorem}[section]
\newtheorem{lem}[thm]{Lemma}

\theoremstyle{definition}

\newtheorem{rem}[thm]{Remark}

\numberwithin{equation}{section}

\newcommand{\pt}{\partial}

\newcommand{\R}{\mathbb{R}}

\renewcommand{\epsilon}{\varepsilon}

\DeclareMathOperator{\re}{Re}
\DeclareMathOperator{\im}{Im}

\title[Scattering for fourth-order NLS]
{Scattering of non-radial solutions for the fourth-order Schr\"{o}dinger equation with $\dot{H}^{1/2}$-critical and supercritical nonlinearity}

\author{Takahisa Inui}
\address{Department of Mathematics, Graduate School of Science, The University of Osaka, Japan 560-0043.}
\email{inui@math.sci.osaka-u.ac.jp}

\author{Koichi Komad}
\address{Research Institute for Science and Engineering, Waseda University, Japan 169-8555}
\email{k-komada@aoni.waseda.jp}

\author{Kuranosuke Nishimura}
\address{ARISE Analytics Inc., THE LINKPILLAR 1 NORTH 10F, 2-21-1 Takanawa, Minato-ku, Tokyo, Japan 108-8618}
\email{1117614@alumni.tus.ac.jp}

\subjclass[2020]{Primary~ 35Q55, Secondary~ 35Q40}

\keywords{fourth-order Schr\"{o}dinger equation, scattering, virial identity}

\date{}

\begin{document}

\maketitle

\noindent
{\bf Abstract.} We consider the focusing nonlinear fourth-order Schr\"{o}dinger equation
\begin{equation*}
    i \pt_ t u + \mu \Delta u - \Delta^2 u 
    = - |u|^{p-1} u, \ \ \ (t,x) \in \R \times \R^d,
\end{equation*}
where $\mu \ge 0$.
The scattering of radial solutions below the ground state is shown by \cite{Guo, Dinh} in the case where $d \ge 2$ and $1 + 8/d < p < 1 + 8/(d-4)_{+}$.
In the present paper, we prove the scattering of non-radial solutions below the ground state in the case where $d \ge 2$ and $1 + 8/(d-1) \le p < 1 + 8/(d-4)_{+}$.
Our proof is based on the concentration compactness argument by Kenig--Merle \cite{KM06}. To overcome difficulties in the non-radial case, we use the virial identity in the direction orthogonal to the momentum vector.
The condition $p \ge 1 + 8/(d-1)$ is required to show the positivity of the functional in that virial identity.

\section{Introduction} \label{sec:intro}

We consider the focusing nonlinear fourth-order Schr\"{o}dinger equation
\begin{equation} \label{eq:4NLS}
      \left\{
      \begin{split}
          & i \pt_t u + \mu \Delta u - \Delta^2 u
          = - |u|^{p-1} u, \ \ \ (t,x)\in \R \times \R^d, \\
          & u |_{t=0} = u_0 \in H^2 (\R^d),
      \end{split}
      \right.
\end{equation}
where $u=u(t,x)$ is a complex-valued unknown function and $\mu \ge 0$ is a constant.
The classical mathematical model for propagation of intense laser beams in a bulk medium with Kerr nonlinearity is given by the nonlinear Schr\"{o}dinger equation (NLS).
To take into account the role of small fourth-order dispersion, Karpman \cite{Karpman}, Karpman--Shagalov \cite{KS} and Fibich--Ilan--Papanicolaou \cite{FIP} studied the nonlinear fourth-order Schr\"{o}dinger equation \eqref{eq:4NLS}.
In Fukumoto--Moffat \cite{FM}, similar equations also appear in the study of a vortex filament in an incompressible fluid.

It is known that \eqref{eq:4NLS} is locally well-posed in $H^2(\R^d)$ and has three conserved quantities; the mass $M(u)$, the energy $E(u)$ and the momentum $P(u)$:
\begin{align*}
      M(u(t))
      := & \ \int_{\R^d} |u(t,x)|^2 dx
      = M(u_0), \\
      E(u(t))
      := & \ \int_{\R^d} \left( 
      \frac{\mu}{2} |\nabla u(t,x)|^2
      + \frac12 |\Delta u(t,x)|^2 
      - \frac{1}{p+1} |u(t,x)|^{p+1}
      \right) dx
      = E(u_0), \\
      P(u(t))
      := & \ \int_{\R^d} \im \left[
      \overline{u}(t,x)\nabla u(t,x) 
      \right] dx
      = P(u_0).
\end{align*}
If $\mu=0$, \eqref{eq:4NLS} is invariant under the scaling transformation 
\begin{align*}
      u(t,x) \mapsto 
      \lambda^{\frac{4}{p-1}} u(\lambda^4 t, \lambda x), 
      \ \ \ \lambda > 0.
\end{align*}
From this scaling, the critical exponent of homogeneous Sobolev spaces is given by
\begin{align*}
      s_c := \frac{d}{2} - \frac{4}{p-1}.
\end{align*}
We say that \eqref{eq:4NLS} is $L^2$-supercritical and $\dot{H}^2$-subcritical if $0 < s_c < 2$. 
This condition is equivalent to $p > 1$ satisfying
\begin{align} \label{eq:intercri}
    1 + \frac{8}{d} < p < 
    \begin{cases}
        \infty & \mbox{if} \ 1 \le d \le 4, \\
        1 + \frac{8}{d-4} & \mbox{if} \ d \ge 5.
    \end{cases}
\end{align}

In this article, we study the global dynamics for \eqref{eq:4NLS} with $p$ satisfying \eqref{eq:intercri}. 
For the defocusing nonlinear fourth-order Schr\"{o}dinger equation
\begin{equation*} 
    i \pt_t u + \mu \Delta u - \Delta^2 u
    = |u|^{p-1} u,
\end{equation*}
Pausader \cite{P07} proved global well-posedness for all radial initial data in $H^2(\R^d)$ when $d \ge 5$ and $1 < p \le 1 + 8/(d-4)$. 
Moreover, he obtained scattering in the $\dot{H}^2$-critical case: $p=1+8/(d-4)$.
Note that the radial assumption is not necessary when $p < 1+8/(d-4)$ and is removed later in \cite{MXZ11, P09JFA} in the remaining $\dot{H}^2$-critical case when $d \ge 8$.
We refer the reader to \cite{PX} for the low dimensional and $L^2$-supercritical cases: $1 \le d \le 4$ and $p > 1+8/d$. 
See also \cite{PS} where global well-posedness and scattering in $L^2$ was proved in the high dimensional $L^2$-critical case: $d\geq5$ and $p=1+8/d$.

Let us turn to the focusing equation \eqref{eq:4NLS}. 
It is known that ground state solutions play an important role in describing global dynamics of nonlinear dispersive equations. 
This is also the case for \eqref{eq:4NLS}. 
Miao--Xu--Zhao \cite{MXZ09} and Pausader \cite{P09DCDS} studied \eqref{eq:4NLS} with $\mu = 0$ in the $\dot{H}^2$-critical case, i.e., the case where $d\ge5$ and $p=1+8/(d-4)$. 
They proved global well-posedness and scattering for radial initial data $u_0$ satisfying $E(u_0) < E(W)$ and $\|u_0\|_{\dot{H}^2(\R^d)} \le \|W\|_{\dot{H}^2(\R^d)}$, where $W$ is a least $\dot{H}^2$ solution to the biharmonic elliptic equation $ \Delta^2 W = |W|^{8/(d-4)} W $.
In \cite{MXZ09,P09DCDS}, the proofs are based on the concentration compactness argument in the spirit of Kenig--Merle \cite{KM06}.
Guo \cite{Guo} studied the scattering problem in the inter-critical case \eqref{eq:intercri}.
When $d \ge 2$ and $\mu = 0$, he proved scattering for radial initial data $u_0$ satisfying
    \begin{equation} \label{eq:scale_prod}
        \left\{
        \begin{split}
            M(u_0)^{\frac{2-s_c}{s_c}} E(u_0) < & \ M(Q)^{\frac{2-s_c}{s_c}} E(Q), \\
            \|u_0\|_{L^2(\R^d)}^{\frac{2-s_c}{s_c}} \|\Delta u_0\|_{L^2(\R^d)} \le & \ \|Q\|_{L^2(\R^d)}^{\frac{2-s_c}{s_c}} \|\Delta Q\|_{L^2(\R^d)},
        \end{split}
        \right.
    \end{equation}
where $Q$ is a ground state solution to $Q + \Delta^2 Q = |Q|^{p-1} Q$.
The proof in \cite{Guo} is based on the concentration compactness argument. 
Dinh \cite{Dinh} provides an alternative proof, avoiding the use of the concentration compactness argument. 
The proof in \cite{Dinh} relies on the approach introduced by Dodson--Murphy \cite{DM}. 
Note that in both \cite{Guo} and \cite{Dinh}, solutions are assumed to be radially symmetric and the one-dimensional case is excluded. 
Komada \cite{Komada} relaxed the assumption of radial symmetry to the assumption of group-invariance.
In particular, the scattering of even solutions was proved in the one dimensional case.
Moreover, in \cite{Komada}, the scattering of solutions below a non-optimal threshold was proved without any symmetry assumptions.
That threshold is the action of the ground state divided by $4^{2/(p-1)}$.

In this paper, we study the scattering of solutions below the ground state without radial symmetry.
For $\omega > 0$, \eqref{eq:4NLS} has standing wave solutions $u(t,x) = e^{i \omega t} \phi(x)$, where $\phi$ is a solution of the fourth-order elliptic equation
\begin{align} \label{eq:SW}
      - \omega \phi + \mu \Delta \phi - \Delta^2 \phi
      = - |\phi|^{p-1} \phi,\ \ \ x\in\R^d.
\end{align}
It is known that for all $ \omega > 0 $, there exists a ground state solution $ Q_{\omega} \in H^2(\R^d) $ to \eqref{eq:SW} such that
\begin{align*} 
    S_{\omega}({Q_{\omega}}) = 
    \inf \{ S_{\omega}(f) \mid f \in H^2(\R^d) \setminus \{0\}, 
    f \ \mbox{is a solution to \eqref{eq:SW}} \},
\end{align*}
where $ S_{\omega}(f) $ is the action defined by
\begin{align*}
    S_{\omega} (f) := \frac{\omega}{2} M(f) + E(f). 
\end{align*}
When $ p $ satisfies \eqref{eq:intercri}, the ground state $ Q_{\omega} $ is characterized by the following alternative minimization problem:
\begin{align} \label{eq:alter_vc}
    S_{\omega}(Q_{\omega}) = 
    \inf \{ S_{\omega}(f) \mid f \in H^2(\R^d) \setminus \{0\},
    K(f) = 0 \},
\end{align}
where $ K(f) $ is the virial functional defined by
\begin{align*}
    K(f) 
    := & \ \left. 
    \frac{\pt}{\pt \lambda} S_{\omega}(T^{\lambda} f)
    \right|_{\lambda = 1} \\
    = & \ \int_{\R^d} \left(
    \mu |\nabla f(x)|^2
    + 2 |\Delta f(x)|^2 
    - \frac{d(p-1)}{2(p+1)} |f(x)|^{p+1}
    \right) dx,
\end{align*}
where $ T^{\lambda}f(x) := \lambda^{\frac{d}{2}} f (\lambda x) $ for $\lambda > 0$. 
We refer to \cite{BCGJ, BCSN, BN, FLNZ} regarding the existence and properties of ground states.

Our main result in this paper is stated as follows.
\begin{thm} \label{thm:main}
    Let $ d \ge 2 $ and assume that $ p $ satisfies
    \begin{align} \label{eq:cond_p}
        1 + \frac{8}{d-1} \le p <
        \begin{cases} 
        \infty & \mbox{if} \ 2 \le d \le 4, \\
        1 + \frac{8}{d-4} & \mbox{if} \ d \ge 5.
        \end{cases}
    \end{align}
    If $ u_0 \in H^2(\R^d) $ satisfies
    \begin{align} \label{eq:below_gs}
        S_{\omega} (u_0) < S_{\omega} (Q_{\omega}) 
        \ \mbox{for some} \ \omega > 0, \ \mbox{and} \ 
        K(u_0) \ge 0,
    \end{align}
    then there exists a unique global solution $ u \in C(\R, H^2(\R^d)) $ to \eqref{eq:4NLS} and $ u $ scatters forward and backward in time, i.e., there exist $ \phi^{\pm} \in H^2(\R^d) $ such that 
    \begin{align} \label{eq:scattering}
        \lim_{t \to \pm \infty}
        \| u(t) - e^{i t (\mu \Delta - \Delta^2)} \phi^{\pm} \|_{H^2(\R^d)} = 0.
    \end{align}
\end{thm}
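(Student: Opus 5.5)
We follow the Kenig--Merle concentration compactness/rigidity scheme, adapted to the non-radial setting by a virial identity taken in the directions orthogonal to the momentum.

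\textbf{Step 1: Variational estimates.} Using the characterization \eqref{eq:alter_vc}, we first show that the set
\[
\mathcal{K}^+ := \{ f \in H^2 : S_\omega(f) < S_\omega(Q_\omega),\ K(f)\ge 0\}
\]
is invariant under the flow. Since $K$ is continuous in time and $S_\omega$ is conserved, $K(u(t))=0$ with $u(t)\ne0$ would contradict \eqref{eq:alter_vc}. On this set $\|u\|_{H^2}$ is bounded, because $S_\omega - \frac{2}{d(p-1)}K$ controls $\|\Delta u\|_{L^2}^2+\|u\|_{L^2}^2$ when $p>1+8/d$; hence the solution is global. We also need a coercivity bound that is uniform in time: $K(u(t)) \ge \delta \|\Delta u(t)\|_{L^2}^2$ for some $\delta>0$ depending on $S_\omega(Q_\omega)-S_\omega(u_0)$. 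This follows by scaling $T^\lambda$ together with the minimization problem. More importantly, we prove a \emph{directional} version. Write $x=(x_1,x')$ with $x_1$ parallel to $P(u)$, and set
\[
K_{\perp}(f) := \int \Big( \mu|\nabla' f|^2 + 2\,\text{(the part of } |\Delta f|^2 \text{ coming from } x' \text{-derivatives)} - \tfrac{(d-1)(p-1)}{2(p+1)}|f|^{p+1}\Big)\,dx .
\]
Equivalently, $K_\perp$ is the derivative of $S_\omega$ under the anisotropic dilation in the $x'$ variables only. We must show $K_\perp(f)\ge \delta'\|\nabla'\cdot\|^2$-type positivity for $f\in\mathcal K^+$, possibly after a Galilean-type reduction to zero momentum in the $x_1$ direction. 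This is the point where $p\ge 1+8/(d-1)$ is used: it guarantees that the anisotropic scaling $\lambda^{(d-1)/2}f(x_1,\lambda x')$ makes the nonlinear term grow at least as fast as the biharmonic terms restricted to $x'$. Then the mountain-pass geometry of $S_\omega$ along that curve, combined with $S_\omega(f)<S_\omega(Q_\omega)$ and $K(f)\ge0$, forces $K_\perp(f)>0$ with a quantitative margin.

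\textbf{Step 2: Small data theory and perturbation.} We use Strichartz estimates for $e^{it(\mu\Delta-\Delta^2)}$, including the gain of derivatives available for the fourth-order group, to obtain small-data scattering and a long-time stability lemma in $\dot H^{s_c}$-critical Strichartz norms. We then prove a linear profile decomposition in $H^2$. Because of the $\mu\Delta$ term there is no scaling, so profiles carry only space and time translations; no Galilean parameter is needed either, since $\mu\Delta-\Delta^2$ is not Galilean invariant, and $H^2$-boundedness excludes frequency escape. The nonlinear profiles are constructed as usual.

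\textbf{Step 3: Critical element.} Suppose the theorem fails. We define the threshold via the action (for fixed $\omega$) and extract a minimal non-scattering solution $u_c\in\mathcal K^+$. Its orbit $\{u_c(t,\cdot+x(t))\}$ is precompact in $H^2$ for some continuous $x(t)$.

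\textbf{Step 4: Rigidity, the main obstacle.} In the radial case one takes $x(t)=0$. Here $x(t)$ may drift, and we control only its components orthogonal to $P(u_c)$. We rotate coordinates so that $P(u_c)=|P|e_1$. Using Galilean-like arguments (or a localized momentum/center-of-mass identity), we show that $|x'(t)|=o(t)$, where $x'$ denotes the components orthogonal to $e_1$. The usual Galilean normalization $P=0$ is unavailable for the fourth-order equation, and this is precisely why only the orthogonal directions are used. Concretely, we consider the truncated center of mass $\int \phi_R(x')\,x'|u|^2\,dx$. Its time derivative is a localized version of the $x'$-component of the momentum, which vanishes, plus terms that are small by compactness. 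Next we take the localized virial
\[
V_R(t)=\int \psi_R(x')\,|u_c|^2\,dx
\]
with $\psi_R$ depending only on $x'$, so that $\frac{d}{dt}\operatorname{Im}\int \nabla'\psi_R\cdot\nabla' u\,\bar u$ equals a multiple of $K_\perp(u_c)$ plus error terms. The errors are supported where $|x'|\gtrsim R$ and are small uniformly by compactness once $R\gg \sup_{[0,T]}|x'(t)|+R_0$. The fourth-order localized virial identity produces error terms involving up to third derivatives of $\psi_R$; these are handled by standard bounds for the biharmonic virial. Integrating on $[0,T]$ with $R\sim \epsilon T$ gives
\[
\delta T\lesssim R\,\|u_c\|_{H^2}^2\lesssim \epsilon T,
\]
a contradiction. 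We expect the main difficulty to lie in proving the positivity of $K_\perp$ in Step 1 and in the control of $x'(t)$. If the center-of-mass argument is hard to run without a Galilean transform, we would first try the approach of proving $x'(t)/t\to0$ through the localized $x'$-center of mass identity, using only that the $x'$-momentum is zero.
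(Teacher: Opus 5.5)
Your Steps 1--3 follow the paper's architecture: invariance of the region below $S_\omega(Q_\omega)$ with $K\ge0$, positivity of $K_\perp$ via the anisotropic dilation $T_\perp^\lambda$ using $(d-1)(p-1)/2\ge4$, and a critical element precompact modulo $x(t)$. Step 4, however, has a genuine gap. Your fixed-center $x'$-virial with $R\sim\epsilon T$ needs $\sup_{[0,T]}|x'(t)|=o(T)$, and the mechanism you propose for this is false for the fourth-order equation. The center of mass is not driven by the momentum; a direct computation gives
\[
\frac{d}{dt}\int_{\R^d}x_j|u|^2\,dx=2\mu P_j(u)+4\,\im\int_{\R^d}\pt_j\overline{u}\,\Delta u\,dx .
\]
So the center of mass moves with the averaged group velocity $2\mu\xi+4|\xi|^2\xi$, not with $\xi$. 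The cubic-in-frequency term is not conserved, and $P_j(u)=0$ does not force it to vanish. Hence the truncated center of mass $\int\phi_R(x')\,x'|u|^2\,dx$ gives no information on $x'(t)/t$; this is exactly where the missing Galilean invariance bites. A priori you only have $|x(t)|\lesssim_u 1+|t|$, which does not make your localization errors small. The same confusion of mass flux with momentum density appears in your definition of $V_R$.

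The missing idea is that you never need to control $x(t)$: center the weight at the moving point. Take $\Psi_R$ radial, $z=x-x(t)$, and $V_{j,R}(t)=\int\pt_j\Psi_R(z)\,\im[\overline{u}\,\pt_ju]\,dx$. Because $V_{j,R}$ is built from the momentum density, differentiating the weight in time gives
\[
-\dot x_k(t)\int_{\R^d}\pt_k\pt_j\Psi_R(z)\,\im[\overline{u}\,\pt_ju]\,dx=-\dot x_j(t)P_j(u)+O\Bigl(|\dot x(t)|\,\|\nabla u\|_{L^2}\|u\|_{L^2(|z|\ge R)}\Bigr).
\]
The main term vanishes identically for $j=2,\dots,d$ once $P(u)$ is parallel to $e_1$. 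All other errors either live on $|x-x(t)|\ge R$ or carry a factor $R^{-2}$, so they are uniformly small by compactness. Thus $\frac{d}{dt}\sum_{j\ge2}V_{j,R}=2K_\perp(u(t))+o_R(1)$ uniformly in $t$, with $|V_{j,R}|\lesssim R$, and choosing $R=T^{1/2}$ closes the argument however $x(t)$ moves.

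The only extra input is $x\in C^1$ with $|\dot x|\lesssim_u1$. The paper obtains this from $|x(t+s)-x(t)|\lesssim_u1$ for $|s|<s_0$ (stability plus precompactness), followed by smooth interpolation of $x$ between the times $t_j=js_0$.

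Finally, the ``Galilean-type reduction'' and the ``quantitative margin'' in your Step 1 are unavailable and unneeded. The qualitative lemma suffices: if $K_\perp(f)\le0$, then along $T_\perp^\lambda f$, $\lambda\ge1$, the action is nonincreasing and $K$ reaches $0$, contradicting \eqref{eq:alter_vc}. Compactness then turns $K_\perp(u(t_n))\to0$ into a limit $\phi$ with $K_\perp(\phi)=0$, hence $K(\phi)=0$ and $\phi=0$.
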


\begin{rem}
    When $d \ge 2$, the condition that $p \ge 1 + 8/(d-1)$ is equivalent to $s_c \ge 1/2$. We need this condition to prove the positivity of a functional in the virial identity in the direction orthogonal to the momentum vector (see Lemma \ref{lem:key} for details).
    Therefore, the scattering of non-radial solutions below the ground state remains an open problem when $d = 1$ or $1 + 8/d < p < 1 + 8/(d-1)$.
\end{rem}

\begin{rem}
    If $\mu=0$, then \eqref{eq:below_gs} is equivalent to the condition \eqref{eq:scale_prod}.
\end{rem}

\begin{rem}
    When $\mu = 0$, $d \ge 2$, $p$ satisfies \eqref{eq:intercri} and $p \le 9$, Boulenger--Lenzmann \cite{BL} proved that if $u_0 \in H^2(\R^d)$ is radial and satisfies $S_{\omega}(u_0) < S_{\omega}(Q_{\omega})$ for some $\omega > 0$ and $K(u_0) < 0$, then the corresponding solution $u$ to \eqref{eq:4NLS} blows up in finite time.
\end{rem}

The proof of Theorem \ref{thm:main} is based on the concentration compactness argument introduced by Kenig--Merle \cite{KM06}, where they proved global well-posedness and scattering of radial solutions below the ground state for the focusing energy-critical nonlinear Schr\"{o}dinger (NLS) equation. 
We also refer to Holmer--Roudenko \cite{HR} for the focusing 3D cubic NLS in the radial case. 
Roughly speaking, the argument begins by contradiction and  reformulating the problem as a variational problem. 
First, we suppose that the threshold of action for scattering is strictly below the ground state. 
Then, we can construct a critical non-scattering solution standing exactly at the threshold. 
If the problem is restricted to the radial case, we can show that the critical solution is spatially localized near the origin uniformly in time. 
This uniform localization enables us to establish the local virial identity, which leads to a contradiction. 
In \cite{Guo}, the proof follows this strategy. 

We recall that the scattering results for NLS are extended to the non-radial case (see \cite{AN,Dodson15,Dodson19,DHR,FXC,KV10}). 
In the non-radial case, the critical solution is localized near some function $x(t)$. 
For NLS, by using Galilean invariance, one can control the behavior of $x(t)$, which enables the contradiction argument via the local virial identity. 
However, the fourth-order Schr\"{o}dinger equation \eqref{eq:4NLS} is not Galilean invariant. 
Due to this lack of Galilean invariance, the local virial argument does not work for \eqref{eq:4NLS} in the non-radial case.

Let us explain the idea of the proof of Theorem \ref{thm:main}.
Suppose that there exists a non-scattering global solution $u$ to \eqref{eq:4NLS} with initial data $u_0$ satisfying \eqref{eq:below_gs}. 
Then from the variational characterization of the ground state \eqref{eq:alter_vc}, we see that $\inf_{t \in \R} K(u(t)) > 0$.
If $u$ is localized near $x(t)$, then the localized version of the following virial identity centered at $x(t)$ 
\begin{align} \label{eq:original_virial}
    \frac{d}{dt} \int_{\R^d} (x-x(t)) \cdot \im [ \overline{u}(t,x) \nabla u(t,x) ] dx
    = 2 K(u(t)) - \dot{x}(t) \cdot P(u)
\end{align}
implies that
\begin{align} \label{eq:average_K}
    \lim_{T \to \infty} \left( \frac{1}{T} \int_0^T 2 K(u(t)) dt - \frac{x(T) - x(0)}{T} \cdot P(u) \right) = 0.
\end{align}
In the radial case, we can assume that $P(u) = 0$ and $x(t)$ is bounded. 
In this case, \eqref{eq:average_K} contradicts $\inf_{t \in \R} K(u(t)) >0$. 
However, this argument is not applicable in the non-radial case.
In the proof of Theorem \ref{thm:main}, instead of \eqref{eq:original_virial}, we use the virial identity in the direction orthogonal to the momentum vector, which follows the idea of Pausader \cite{P10}.
By rotational symmetry and momentum conservation of \eqref{eq:4NLS}, we may assume that $P(u)$ is always parallel to the first vector $(1,0,...,0) \in \R^d$ without loss of generality.
In this case, if $u$ is localized near $x(t)$, then the localized version of the following identity holds:
\begin{align} \label{eq:virial_perp}
    \frac{d}{dt} \int_{\R^d} (x - x(t))_{\perp} \cdot \im [\overline{u}(t,x) \nabla_{\perp} u(t,x)] dx
    = 2 K_{\perp}(u(t)),
\end{align}
where $x_{\perp} = (x_2,...,x_d) \in \R^{d-1}$, $\nabla_{\perp} = (\frac{\pt}{\pt x_2},...,\frac{\pt}{\pt x_d})$ and
\begin{align*} 
    K_{\perp}(f)
    := \int_{\R^d} \left(
    \mu |\nabla_{\perp} f|^2 
    + 2 |\nabla_{\perp} \pt_1 f|^2 + 2 |\Delta_{\perp} f|^2
    - \frac{(d-1)(p-1)}{2(p+1)} |f|^{p+1}
    \right) dx,
\end{align*}
where $\pt_1 = \frac{\pt}{\pt x_1}$, $\Delta_{\perp} = \sum_{j=2}^{d} \frac{\pt^2}{\pt x_j^2}$.
In \cite{P10}, Pausader used a virial identity similar to \eqref{eq:virial_perp} to prove the large data scattering of non-radial solutions for the defocusing beam equation. 
We also refer to \cite{PS} where the defocusing nonlinear fourth-order Schr\"{o}dinger equation was considered.
In the defocusing case, the functional in the virial identity is always positive.
However, in the focusing case, we need to show the positivity of the functional from its variational properties.
The new key ingredient is the positivity of $K_{\perp}$ below the ground state (see Lemma \ref{lem:key}).
When $p$ satisfies \eqref{eq:cond_p}, we can prove that if $S_{\omega}(f) < S_{\omega}(Q_{\omega})$ for some $\omega > 0$ and $K(f) > 0$, then $K_{\perp}(f) > 0$.
Using this fact and \eqref{eq:virial_perp}, we can derive a contradiction and complete the proof.

We remark that our method is applicable to other nonlinear dispersive equations without Galilean invariance.
In \cite{IN}, the first and last authors prove the scattering of non-radial solutions below the ground state for the quadratic NLS system in $\R^5$ without mass resonance condition.

The rest of the paper is organized as follows: 
In Section \ref{sec:va}, we discuss the properties of ground states and functionals.
In Section \ref{sec:cc}, we define the threshold of action for scattering and apply the concentration compactness argument to construct a critical non-scattering solution. 
Finally, in Section \ref{sec:rigidity}, we complete the proof of Theorem \ref{thm:main} using the localized virial identity in the direction orthogonal to the momentum vector.

\section{Variational analysis} \label{sec:va}

In this section, we discuss the properties of the ground state $Q_{\omega}$ and functionals $S_{\omega}$, $K$, $K_{\perp}$.

\begin{lem} \label{lem:sign_K}
    Let $ d \ge 1 $ and $p$ satisfy \eqref{eq:intercri}.
    Let $ u \in C(I,H^2(\R^d)) $ be a solution to \eqref{eq:4NLS}, where $ I \subset \R $ is a maximal interval of existence. 
    Suppose that $ S_{\omega}(u(t_0)) < S_{\omega}(Q_{\omega}) $ for some $ \omega > 0 $ and $ K(u(t_0)) \ge 0 $ for some $ t_0 \in I $.
    Then $ S_{\omega}(u(t)) < S_{\omega}(Q_{\omega}) $ and $ K(u(t)) \ge 0 $ for all $ t \in I $. 
    In particular, $I = \R$ and $\|u\|_{L_t^{\infty} H_x^2 (\R \times \R^d)} < \infty$.
\end{lem}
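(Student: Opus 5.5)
Conservation of mass and energy makes the first assertion immediate: $S_\omega(u(t)) = \frac{\omega}{2}M(u_0)+E(u_0) = S_\omega(u(t_0)) < S_\omega(Q_\omega)$ for every $t\in I$. The remaining work is to propagate the sign of $K$ and then turn the sign information into an a priori $H^2$ bound, from which $I=\R$ follows by the blow-up alternative of the $H^2$ local theory.

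\textbf{Sign of $K$.} We argue by contradiction and continuity. Suppose $K(u(t_1))<0$ for some $t_1\in I$. The map $t\mapsto K(u(t))$ is continuous on $I$, since $u\in C(I,H^2)$ and $H^2\hookrightarrow L^{p+1}$ for $p$ in the range \eqref{eq:intercri}. Hence there is $t_2$ between $t_0$ and $t_1$ with $K(u(t_2))=0$. Mass conservation gives $u(t_2)\neq 0$, because $u\equiv 0$ would force $K\equiv0$. The variational characterization \eqref{eq:alter_vc} then yields $S_\omega(u(t_2))\ge S_\omega(Q_\omega)$, which contradicts the first step. So $K(u(t))\ge0$ for all $t\in I$.

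\textbf{Uniform bound.} We combine $S_\omega$ with $K$ to remove the nonlinear term. A direct computation gives
\begin{align*}
S_\omega(f)-\frac{2}{d(p-1)}K(f) = \frac{\omega}{2}M(f)+\Big(\frac12-\frac{2}{d(p-1)}\Big)\mu\|\nabla f\|_{L^2}^2+\Big(\frac12-\frac{4}{d(p-1)}\Big)\|\Delta f\|_{L^2}^2 .
\end{align*}
The condition $p>1+8/d$ makes both coefficients positive, and $\mu\ge0$ keeps the gradient term nonnegative. Since $K(u(t))\ge0$, we obtain
\begin{align*}
\frac{\omega}{2}\|u(t)\|_{L^2}^2+c\|\Delta u(t)\|_{L^2}^2\le S_\omega(u(t))<S_\omega(Q_\omega),
\end{align*}
which bounds $\|u(t)\|_{H^2}$ uniformly on $I$. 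Because the problem is $\dot H^2$-subcritical, the local existence time depends only on the $H^2$ norm of the data, so a uniform bound excludes finite-time blow-up and $I=\R$.

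The only step that needs some care is the continuity argument, specifically that the zero of $K$ occurs at a nonzero function, so that \eqref{eq:alter_vc} actually applies. Mass conservation handles this: if $u_0=0$ the statement is trivial.
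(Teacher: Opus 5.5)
Your proof is correct and follows the paper's argument essentially step for step. Conservation gives the bound on $S_\omega$, and an intermediate-value argument combined with \eqref{eq:alter_vc} propagates $K\ge 0$. The combination $S_\omega-\frac{2}{d(p-1)}K$, whose coefficients you computed correctly, then gives the uniform $H^2$ bound and hence $I=\R$; your explicit handling of the case $u(t_2)=0$ via mass conservation spells out a step the paper leaves implicit.
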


\begin{proof}
    By the conservation laws, we see that $S_{\omega}(u(t)) < S_{\omega}(Q_{\omega})$ for all $t \in I$.
    To argue by contradiction, we suppose that there exists $t_1 \in I \setminus \{t_0\}$ such that $K(u(t_1)) < 0$.
    Then, by continuity, there exists $t_2 \in I$ such that $K(u(t_2)) = 0$.
    From \eqref{eq:alter_vc} we see that $u(t_2) = 0$ or $S_{\omega}(u(t_2)) \ge S_{\omega}(Q_{\omega})$.
    These are contradictions.
    Therefore, $K(u(t)) \ge 0$ for all $t \in I$.
    Then, we have
    \begin{align*}
        S_{\omega}(Q_{\omega})
        > & \ S_{\omega}(u) - \frac{2}{d(p-1)} K(u(t)) \\
        = & \ \frac{\omega}{2} \|u(t)\|_{L^2}^2
        + \frac{d + 2 s_c}{4d} \mu \|\nabla u(t)\|_{L^2}^2
        +\frac{s_c}{d} \|\Delta u(t)\|_{L^2}^2
    \end{align*}
    for all $t \in I$, which implies that there exists a constant $C > 0$ independent of $I$ such that $\|u\|_{L_t^{\infty} H_x^2 (I \times \R^d)} < C$.
    Using this uniform bound, the solution $u$ can be extended globally in time, that is, $I = \R$.
\end{proof}

When $d \ge 2$, we decompose $x \in \R^d$ as $x = (x_1,x_{\perp})$ with $x_{\perp} = (x_2,...,x_d)$.
For any $\lambda > 0$, we define the scale transformation $T_{\perp}^{\lambda}$ as
\begin{align*}
    T_{\perp}^{\lambda} f (x)
    := \lambda^{\frac{d-1}{2}} f (x_1, \lambda x_{\perp}).
\end{align*}
Note that $\|T_{\perp}^{\lambda}f\|_{L^2(\R^d)} = \|f\|_{L^2(\R^d)}$ and $\|\pt_1 T_{\perp}^{\lambda} f\|_{L^2(\R^d)} = \|\pt_1 f\|_{L^2(\R^d)}$ for all $\lambda >0$.
Let us define $K_{\perp}$ as
\begin{align*}
    K_{\perp}(f) 
    := & \ \left. \frac{\pt}{\pt \lambda} S_{\omega}(T_{\perp}^{\lambda} f) \right|_{\lambda = 1} \\
    = & \ \int_{\R^d} \left(
    \mu |\nabla_{\perp} f|^2
    + 2 |\nabla_{\perp} \pt_1 f|^2 
    + 2 |\Delta_{\perp} f|^2 
    - \frac{(d-1)(p-1)}{2(p+1)} |f|^{p+1} 
    \right) dx \\
    = & \ K(f) - \int_{\R^d} \left( 
    \mu |\pt_1 f|^2 + 2 |\nabla \pt_1 f|^2 -\frac{p-1}{2(p+1)} \int_{\R^d} |f|^{p+1}  
    \right) dx,
\end{align*}
where $\pt_1 = \frac{\pt}{\pt x_1}$, $\nabla_{\perp} = (\frac{\pt}{\pt x_2},...,\frac{\pt}{\pt x_d})$, $\Delta_{\perp} = \sum_{j=2}^{d} \frac{\pt^2}{\pt x_j^2}$.

\begin{lem} \label{lem:key}
    Let $ d \ge 2 $ and $p$ satisfy \eqref{eq:cond_p}. 
    If $ f \in H^2(\R^d) $ satisfies $ S_{\omega}(f) < S_{\omega}(Q_{\omega}) $ for some $ \omega > 0 $ and $ K(f) > 0 $, then $ K_{\perp}(f) > 0 $.
\end{lem}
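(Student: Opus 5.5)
The plan is to argue by contradiction, using the anisotropic scaling $T_{\perp}^{\lambda}$ together with the variational characterization \eqref{eq:alter_vc}. Suppose $S_{\omega}(f) < S_{\omega}(Q_{\omega})$ and $K(f) > 0$, but $K_{\perp}(f) \le 0$. Note that $f \neq 0$, since $K(0)=0$. We will push $f$ along the curve $\lambda \mapsto T_{\perp}^{\lambda} f$ for $\lambda \ge 1$. Along this curve $S_{\omega}$ will turn out to be non-increasing, while $K$ must eventually become negative. This produces a nonzero function with $K = 0$ and action strictly below $S_{\omega}(Q_{\omega})$, which contradicts \eqref{eq:alter_vc}.

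\textbf{Step 1: scaling laws.} Write $a=\|\nabla_{\perp} f\|_{L^2}^2$, $b=\|\nabla_{\perp}\pt_1 f\|_{L^2}^2$, $c=\|\Delta_{\perp} f\|_{L^2}^2$ and $N=\|f\|_{L^{p+1}}^{p+1}$. Set $\gamma := \frac{(d-1)(p-1)}{2}$ and $\alpha := \frac{(d-1)(p-1)}{2(p+1)}$. Under $T_{\perp}^{\lambda}$ the following hold:
\begin{itemize}
\item the mass, $\|\pt_1 f\|_{L^2}$ and $\|\pt_1^2 f\|_{L^2}$ are invariant;
\item $a$ and $b$ scale like $\lambda^2$, and $c$ scales like $\lambda^4$;
\item $N$ scales like $\lambda^{\gamma}$.
\end{itemize}
Since $\frac{d}{d\lambda} S_{\omega}(T_{\perp}^{\lambda} f) = \lambda^{-1} K_{\perp}(T_{\perp}^{\lambda} f)$, we obtain
\begin{align*}
    \lambda^{-2} K_{\perp}(T_{\perp}^{\lambda} f) = (\mu a + 2b) + \lambda^2\bigl( 2c - \alpha N \lambda^{\gamma - 4} \bigr).
\end{align*}
The key point is that condition \eqref{eq:cond_p} is exactly $\gamma \ge 4$. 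This is where the hypothesis $s_c \ge 1/2$ enters.

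\textbf{Step 2: monotonicity for $\lambda \ge 1$.} The quantity $A := \mu a + 2b$ is positive, because $b > 0$ for any nonzero $f \in H^2$ (by Plancherel, $\xi_1 \xi_{\perp} \hat f \not\equiv 0$). Evaluating the identity of Step 1 at $\lambda = 1$, the assumption $K_{\perp}(f) \le 0$ gives $2c - \alpha N \le -A < 0$. When $\gamma > 4$, the function $\lambda^{-2}K_{\perp}(T_{\perp}^{\lambda} f)$ is strictly decreasing for $\lambda \ge 1$. When $\gamma = 4$, it equals $A + \lambda^2(2c-\alpha N)$ with a negative coefficient, so it is again strictly decreasing. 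In either case $K_{\perp}(T_{\perp}^{\lambda} f) < 0$ for all $\lambda > 1$. Hence $\lambda \mapsto S_{\omega}(T_{\perp}^{\lambda} f)$ is decreasing on $[1,\infty)$, and $S_{\omega}(T_{\perp}^{\lambda} f) \le S_{\omega}(f) < S_{\omega}(Q_{\omega})$ there.

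\textbf{Step 3: $K$ becomes negative.} In $K(T_{\perp}^{\lambda} f)$ the kinetic terms grow at most like $\lambda^4$: the $\lambda^4$ coefficient is $2c$, and the remaining terms are of order $\lambda^2$ or constant. The potential term is $-\frac{d(p-1)}{2(p+1)} N \lambda^{\gamma}$. If $\gamma > 4$, then $K(T_{\perp}^{\lambda} f) \to -\infty$ as $\lambda \to \infty$. If $\gamma = 4$, the $\lambda^4$ coefficient is $2c - \frac{d}{d-1}\alpha N < 2c - \alpha N < 0$, so again $K(T_{\perp}^{\lambda} f) \to -\infty$. Since $K(f) > 0$, continuity gives some $\lambda_1 > 1$ with $K(T_{\perp}^{\lambda_1} f) = 0$. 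Moreover $T_{\perp}^{\lambda_1} f \ne 0$ and $S_{\omega}(T_{\perp}^{\lambda_1} f) < S_{\omega}(Q_{\omega})$, which contradicts \eqref{eq:alter_vc}.

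I expect the main obstacle to be the endpoint $\gamma = 4$ (that is, $p = 1 + 8/(d-1)$). There the nonlinear term and $\|\Delta_{\perp} f\|_{L^2}^2$ scale identically, so neither the monotonicity in Step 2 nor the divergence in Step 3 follows from powers of $\lambda$ alone. Both instead rely on extracting the sign $2c - \alpha N < 0$ from $K_{\perp}(f) \le 0$ together with $A > 0$, which is why the positivity of $\|\nabla_{\perp} \pt_1 f\|_{L^2}$ has to be checked carefully.
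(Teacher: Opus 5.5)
Your proof is correct and follows the paper's argument: assume $K_{\perp}(f)\le 0$, show that $\lambda\mapsto S_{\omega}(T_{\perp}^{\lambda}f)$ is non-increasing on $[1,\infty)$ while $K(T_{\perp}^{\lambda}f)\to-\infty$, and contradict \eqref{eq:alter_vc} at a zero of $K$. The paper avoids your case split $\gamma>4$ versus $\gamma=4$ by using the bounds $\lambda\pt_\lambda S_{\omega}(T_{\perp}^{\lambda}f)\le K_{\perp}(f)\lambda^4$ and \eqref{eq:K(perp)}, valid uniformly for all $\lambda\ge1$; also, neither version needs $\|\nabla_{\perp}\pt_1 f\|_{L^2}>0$, because the strict sign at the endpoint already comes from $\|f\|_{L^{p+1}}^{p+1}>0$.
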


\begin{rem}
    When $d \ge 2$ and $p$ satisfies \eqref{eq:cond_p}, for all $\omega > 0$, the ground state $Q_{\omega}$ is characterized as 
    \begin{align*}
        S_{\omega}(Q_{\omega})
        = \inf \{ S_{\omega}(f) \mid f \in H^2(\R^d) \setminus \{0\}, K_{\perp}(f) = 0 \}.
    \end{align*}
    See Lemma \ref{lem:ap} for the details.
\end{rem}

\begin{proof}[Proof of Lemma \ref{lem:key}.]
    Suppose $ K_{\perp}(f) \le 0 $. 
    Since $p \ge 1 + 8/(d-1)$, we have
    \begin{align*}
        \lambda \frac{\pt}{\pt \lambda} S_{\omega}(T_{\perp}^{\lambda} f)
        = & \ \mu \|\nabla_{\perp} f\|_{L^2(\R^d)}^2 \lambda^2
        + 2 \|\nabla_{\perp} \pt_1 f\|_{L^2(\R^d)}^2 \lambda^2
        + 2 \|\Delta_{\perp} f\|_{L^2(\R^d)}^2 \lambda^4 \\
        & \ - \frac{(d-1)(p-1)}{2(p+1)}\|f\|_{L^{p+1}(\R^d)}^{p+1} \lambda^{\frac{(d-1)(p-1)}{2}} \\
        \le & \ K_{\perp}(f) \lambda^4
        \le 0
    \end{align*}
    for all $\lambda \ge 1$.
    This implies that $ \lambda \mapsto S_{\omega}(T_{\perp}^{\lambda} f) $ is non-increasing for all $ \lambda \ge 1 $.
    Since $ p \ge 1 + 8/(d-1) $, we have
    \begin{align} 
        K(T_{\perp}^{\lambda} f)
        = & \ \mu \|\pt_1 f\|_{L^2(\R^d)}^2
        + 2 \|\pt_1^2 f\|_{L^2(\R^d)}^2 
        + \left(
        \mu \|\nabla_{\perp} f\|_{L^2(\R^d)}^2 
        + 4 \|\nabla_{\perp} \pt_1 f\|_{L^2(\R^d)}^2 
        \right) \lambda^2 \notag \\
        & \ + 2 \|\Delta_{\perp} f\|_{L^2(\R^d)}^2 \lambda^4 
        - \frac{d(p-1)}{2(p+1)} \|f\|_{L^{p+1}(\R^d)}^{p+1} \lambda^{\frac{(d-1)(p-1)}{2}} \notag \\
        \le & \ \mu \|\pt_1 f\|_{L^2(\R^d)}^2
        + 2 \|\pt_1^2 f\|_{L^2(\R^d)}^2 
        + 2 \|\nabla_{\perp} \pt_1 f\|_{L^2(\R^d)}^2 \lambda^2 \notag \\ 
        & \ + \left(
        K_{\perp}(f) - \frac{p-1}{2(p+1)} \|f\|_{L^{p+1}(\R^d)}^{p+1}
        \right) \lambda^{\frac{(d-1)(p-1)}{2}}
        \label{eq:K(perp)}
    \end{align}
    for all $\lambda \ge 1$. 
    Hence, using $ K_{\perp} (f) \le 0 $ we see that $ K(T_{\perp}^{\lambda} f) \to - \infty $ as $ \lambda \to \infty $.
    On the other hand, $ K(T_{\perp}^1 f) = K(f) > 0 $.
    Therefore, there exists $ \lambda_0 \in (1,\infty) $ such that $ K(T_{\perp}^{\lambda_0} f) = 0 $, which implies that
    \begin{align*}
        S_{\omega}(Q_{\omega}) 
        \le S_{\omega}(T_{\perp}^{\lambda_0} f)
        \le S_{\omega}(T_{\perp}^1 f)
        = S_{\omega}(f).
    \end{align*}
    This contradicts $ S_{\omega}(f) < S_{\omega}(Q_{\omega}) $.
\end{proof}

\section{Concentration compactness} \label{sec:cc}

If the initial data $u_0 \in H^2(\R^d)$ satisfies \eqref{eq:below_gs}, from \eqref{lem:sign_K} the corresponding solution $u$ globally exists and $\|u\|_{L_t^{\infty} H_x^2 (\R \times \R^d)} < \infty$.
Hence, it suffices to prove that $u$ scatters.
Let us define
\begin{align*}
    \|u\|_{X(I)}
    := \|u\|_{L_t^{q_1} L_x^{r} (I \times \R^d)},
\end{align*}
where $r=p+1$ and $q_1 = \frac{4(p-1)(p+1)}{8-(d-4)(p-1)}$.
Applying the Strichartz type estimates for the fourth-order Schr\"{o}dinger equation, $\|u\|_{X(\R)} < \infty$ implies \eqref{eq:scattering} (see \cite{Komada, KoMaNL}). 

Fix $ \omega > 0 $. For $ L \ge 0 $, we define
\begin{align*}
    C_{\omega}(L)
    := \sup \{ \|u\|_{X(I)}^{q_1} \mid 
    S_{\omega}(u_0) \le L, \ K(u_0) \ge 0 \},
\end{align*}
where the supremum is taken over all solutions $u$ to \eqref{eq:4NLS} defined on $ I \times \R^d $ with the initial data $ u_0 \in H^2(\R^d) $.
Set
\begin{align*}
    L_{\omega}^{*} 
    := \sup \{ L \in [0,S_{\omega}(Q_{\omega})) \mid 
    C_{\omega}(L) < \infty \}.
\end{align*}
Then, to prove Theorem \ref{thm:main}, it suffices to show $ L_{\omega}^{*} \ge S_{\omega}(Q_{\omega}) $ for all $\omega > 0$.
Note that the small data scattering theory shows $L_{\omega}^{*} > 0$.
If $L_{\omega}^{*} < S_{\omega}(Q_{\omega})$, then by the concentration compactness argument, we can construct a critical non-scattering solution with initial data satisfying \eqref{eq:below_gs}.
Moreover, this solution has a precompact orbit in $H^2(\R^d)$ modulo spatial translation.

\begin{lem} \label{lem:critical_sol}
    Let $ d \ge 1 $ and $p$ satisfy \eqref{eq:intercri}.
    Suppose $ L_{\omega}^{*} < S_{\omega}(Q_{\omega}) $.
    Then there exists a global solution $ u \in C(\R,H^2(\R^d)) $ to \eqref{eq:4NLS} such that $ S_{\omega}(u) = L_{\omega}^{*} $, $ K(u(0)) \ge 0 $ and  
    \begin{align} \label{eq:non-scattering}
        \|u\|_{X((-\infty, 0])}
        = \|u\|_{X([0, \infty))}
        = \infty.
    \end{align}
    Moreover, there exists a function $ x(\cdot) \in C^1(\R,\R^d) $ such that $ |\dot{x}(t)| \lesssim_{u} 1 $ for all $ t \in \R $ and $ \{ u(t,\cdot + x(t)) \mid t \in \R \} $ is  precompact in $ H^2(\R^d) $.
\end{lem}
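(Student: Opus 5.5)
We follow the standard Kenig--Merle construction, adapted to the inter-critical fourth-order setting as in Guo and Komada. The argument rests on three ingredients.

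\begin{enumerate}[(i)]
\item \emph{A linear profile decomposition in $H^2(\R^d)$ for the propagator $e^{it(\mu\Delta-\Delta^2)}$, with spatial and time translations only.} Since the action $S_\omega$ controls the $H^2$ norm, there is no scaling parameter. Given a bounded sequence $\{\phi_n\}$, we write
\[
\phi_n=\sum_{j=1}^J e^{-it_n^j(\mu\Delta-\Delta^2)}\psi^j(\cdot-x_n^j)+W_n^J .
\]
The parameters are asymptotically orthogonal, and the remainder satisfies $\limsup_n\|e^{it(\mu\Delta-\Delta^2)}W_n^J\|_{X(\R)}\to0$ as $J\to\infty$. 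We also need Pythagorean expansions of $M$, of the quadratic parts of $E$, of $\|\cdot\|_{L^{p+1}}^{p+1}$, and hence of $S_\omega$ and $K$. The proof goes through a refined Strichartz or Sobolev inequality combined with a Lieb-type compactness lemma. It is routine but technical; we cite \cite{Komada, KoMaNL} for it.
\item \emph{Long-time perturbation (stability) theory in the norm $X$,} via Strichartz estimates for the fourth-order operator.
\item \emph{Nonlinear profiles.} If $t_n^j\to\pm\infty$, we use the wave operator. This is available because below the ground state with $K\ge0$, and by Lemma \ref{lem:sign_K}, the data remain in the region $S_\omega<S_\omega(Q_\omega)$, $K\ge0$.
\end{enumerate}

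\textbf{Construction.} Take solutions $u_n$ with $S_\omega(u_n)\nearrow L^*_\omega$, $K(u_n(0))\ge0$, and $\|u_n\|_{X([0,\infty))},\ \|u_n\|_{X((-\infty,0])}\to\infty$. These exist by the definition of $L^*_\omega$ together with time translation and a splitting of the $X$-norm. Apply the profile decomposition to $u_n(0)$.

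\textbf{Keeping every profile in the good region.} We need each profile to satisfy $S_\omega\le L^*_\omega$ and $K\ge0$. The key variational fact is that, below $S_\omega(Q_\omega)$, $K(f)\ge0$ is equivalent to
\[
S_\omega(f)-\tfrac{2}{d(p-1)}K(f)<S_\omega(Q_\omega)
\]
up to a positive-definite quadratic form. Hence, below the threshold, $K\ge0$ forces $S_\omega\ge$ a positive quantity comparable to $\|f\|_{H^2}^2$. Combining this with the additivity of $S_\omega$ and $K$, and with $S_\omega(\text{linear profile at }t_n^j)\ge0$, each profile, and the remainder, satisfies $S_\omega\le L^*_\omega$ and $K\ge0$ asymptotically.

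\textbf{Ruling out multiple profiles.} If two or more profiles were nonzero, or if one profile had $S_\omega<L^*_\omega$, then every nonlinear profile would have finite $X$-norm, by the definition of $L^*_\omega$. Stability theory would then give $\sup_n\|u_n\|_{X}<\infty$, which is a contradiction. So there is exactly one profile, with $S_\omega=L^*_\omega$ and with the remainder tending to $0$ in $H^2$.

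\textbf{Choosing the time parameter.} If $t_n^1\to+\infty$, the small linear evolution on $[0,\infty)$ contradicts $\|u_n\|_{X([0,\infty))}\to\infty$. The case $t_n^1\to-\infty$ is symmetric. Hence $t_n^1$ is bounded, and we may take $t_n^1=0$. This yields the key compactness claim: any sequence as above has a subsequence for which $u_n(0,\cdot+x_n)$ converges in $H^2$.

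\textbf{The critical solution and its orbit.} Let $u$ be the solution with data equal to the limit. Then $S_\omega(u)=L^*_\omega$ and $K(u(0))\ge0$, and stability gives \eqref{eq:non-scattering}. Applying the key claim to $u(t_n)$ for an arbitrary time sequence $t_n$ gives precompactness of $\{u(t,\cdot+x(t))\}$ for some function $x(t)$.

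\textbf{Regularizing the translation parameter.} To get $x\in C^1$ with $|\dot x|\lesssim_u1$, we first choose $x(t)$ on a lattice of times, for instance as a point maximizing a localized mass. Precompactness and continuity of the flow in $H^2$ bound $|x(t)-x(s)|$ for $|t-s|\le1$. Then we smooth $x$ by a piecewise linear or mollified interpolation; by equicontinuity, the modulated orbit stays precompact.

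\textbf{Main obstacle.} The main difficulty is the profile decomposition and the decoupling of the nonlinear profiles in $X$ without Galilean symmetry. We handle it as in \cite{Komada}: only translations are needed, since the dispersion relation is not Galilean invariant and the inhomogeneous operator $\mu\Delta-\Delta^2$ breaks scaling anyway.
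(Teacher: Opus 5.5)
Your proposal is correct and follows essentially the same route as the paper. The paper also takes the critical element and a precompact modulation $x(t)$ from \cite{Komada, KoMaNL}. It then shows $|x(t+s)-x(t)|\lesssim_u 1$ for $|s|<s_0$: if the increments diverged along $s_n\to0$, then $u(t_n+s_n,\cdot+x(t_n))$ would converge weakly to $0$, while stability keeps it $H^2$-close to the limit of $u(t_n,\cdot+x(t_n))$, forcing $S_\omega(u)=0$. Finally it interpolates on a lattice of times, exactly as you outline. One small correction: piecewise linear interpolation is only Lipschitz, not $C^1$, so you need the mollified variant, or the paper's $\frac12\bigl(1-\cos(\pi(t-t_j)/s_0)\bigr)$ interpolation, whose derivative vanishes at the lattice points.
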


\begin{proof}
    Applying the concentration compactness argument, we can prove that there exists a global solution $u$ to \eqref{eq:4NLS} such that $S_{\omega}(u) = L_{\omega}^{*}$, $K(u(0)) \ge 0$ and \eqref{eq:non-scattering} holds.
    Moreover, there exists a function $x(\cdot) \in C(\R,\R^d)$ such that $\{ u(t,\cdot+x(t)) \mid t \in \R \}$ is precompact in  $H^2(\R^d)$.
    See \cite{Komada, KoMaNL} for the details.
    
    Therefore, it suffices to prove that $x(\cdot) \in C^1(\R,\R^d)$ and $|\dot{x}(t)| \lesssim_u 1$ for all $t \in \R$.
    To prove this, we first claim that there exists $s_0>0$ such that 
    \begin{align*}
        |x(t + s) - x(t)| \lesssim_u 1
    \end{align*}
    for all $t \in \R$ and for all $|s| < s_0$.
    Indeed, if this were not true, then there exist $\{t_n\}$ and $\{s_n\}$ such that $s_n \to 0$ and
    \begin{align} \label{eq:if_not}
        |x(t_n + s_n) - x(t_n)| \to \infty. 
    \end{align}
    Let $v_n$ be a solution to \eqref{eq:4NLS} with initial data $v_n(0) = u(t_n,\cdot + x(t_n))$. 
    Then, by the precompactness, $v_n(0)$ converges to some $v_0$ in $H^2(\R^d)$.
    Let $v$ be a solution to \eqref{eq:4NLS} with initial data $v(0)=v_0$.
    By the stability (Proposition 2.7 in \cite{Komada}), for any compact interval $I \subset \R$, $v_n$ converges to $v$ uniformly on $I$.
    By the continuity of $v$, for all $\epsilon > 0$, there exists $\nu > 0$ such that 
    \begin{align*}
        \|v(s_n) - v_0\|_{H^2(\R^d)} < \epsilon
    \end{align*}
    if $|s_n| < \nu$.
    Hence, for all $\epsilon > 0$, we have
    \begin{align}
        \|v_n(s_n) - v_n(0)\|_{H^2(\R^d)}
      \le & \ \|v_n(s_n) - v(s_n)\|_{H^2(\R^d)}
      + \|v(s_n) - v_0\|_{H^2(\R^d)} \notag \\
      & \ + \|v_0 - v_n(0)\|_{H^2(\R^d)} \notag \\
      < & \ 3 \epsilon \label{eq:3epsilon}
    \end{align}
    for sufficiently large $n$.
    By the uniqueness of \eqref{eq:4NLS}, we have $v_n(s_n) = u(s_n + t_n, \cdot + x(t_n))$.
    Thus, by the precompactness of $\{ u(t,\cdot + x(t)) \mid t \in \R \}$ and \eqref{eq:if_not}, we see that $v_n(s_n)$ converges to $0$ weakly in $H^2(\R^d)$.
    Then, from \eqref{eq:3epsilon}, we see that $v_n(0)$ also converges to $0$ weakly in $H^2(\R^d)$.
    Therefore, $v_0 = 0$ and so $S_{\omega}(u) = \lim_{n \to \infty} S_{\omega}(v_n(0)) = S_{\omega}(v_0) = 0$, which contradicts the fact that $S_{\omega}(u) = L_{\omega}^{*} > 0$.
    
    Let $t_j := j s_0$ and define $\tilde{x}(t)$ by
    \begin{align*}
        \tilde{x}(t) 
        := x(t_j) + \frac12 \left( 1 - \cos \left( \frac{\pi(t-t_j)}{s_0} \right) \right) (x(t_{j+1}) - x(t_j))
    \end{align*}
    for $t_j \le t \le t_{j+1}$.
    Then, $\tilde{x}(t_j) = x(t_j)$ for all $j$.
    Moreover, $\tilde{x} \in C^1(\R)$ and 
    \begin{align*}
        |\dot{\tilde{x}}(t)| \lesssim_u s_0^{-1}
    \end{align*}
    for all $t \in \R$.
    Since $|x(t) - \tilde{x}(t)| \lesssim_u 1$ for all $t \in \R$, we see that $\{ u(t,\cdot + \tilde{x}(t)) \mid t \in \R \}$ is also precompact in $H^2(\R^d)$.
    Thus, replacing $x(t)$ by $\tilde{x}(t)$, we get $|\dot{x}(t)| \lesssim_u 1$.
\end{proof}

\section{Rigidity} \label{sec:rigidity}

In this section, we complete the proof of Theorem \ref{thm:main} by contradiction and the localized virial identity.
To construct the localized virial identity, we use the following lemma.

\begin{lem} \label{lem:tightness}
    Let $ u \in C(\R,H^2(\R^d)) $ be such that $ \{ u (t, \cdot + x(t)) \mid t \in \R \} $ is precompact in $ H^2(\R^d) $ for some function $ x(t) $. 
    Then for all $ \eta > 0 $, there exists $ C(\eta) > 0 $ such that
    \begin{align*}
        \int_{|x-x(t)| \ge C(\eta)} \left(
        |u(t,x)|^2
        + |\nabla u(t,x)|^2 + |\Delta u(t,x)|^2 + |u(t,x)|^{p+1}
        \right) dx
        \le \eta
    \end{align*}
    for all $ t \in \R $.
\end{lem}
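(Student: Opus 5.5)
The plan is to use the standard compactness argument: a precompact set in $H^2(\R^d)$ is uniformly tight. By translation invariance of the integrals, the statement is equivalent to
\begin{align*}
    \int_{|y| \ge C(\eta)} \left( |v|^2 + |\nabla v|^2 + |\Delta v|^2 + |v|^{p+1} \right) dy \le \eta
\end{align*}
for all $v$ in the set $\mathcal{K} := \{ u(t,\cdot + x(t)) \mid t \in \R \}$. This follows from the change of variables $y = x - x(t)$ with $v = u(t,\cdot+x(t))$.

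First, I handle the quadratic terms. Fix $\eta>0$. Since $\mathcal{K}$ is precompact in $H^2(\R^d)$, it is totally bounded. So there are finitely many $v_1,\dots,v_N \in H^2(\R^d)$ such that every $v \in \mathcal{K}$ satisfies $\|v - v_j\|_{H^2} < \delta$ for some $j$, with $\delta = \delta(\eta)$ to be chosen. Each fixed $v_j$ has $\int_{|y|\ge R}(|v_j|^2+|\nabla v_j|^2+|\Delta v_j|^2)\,dy \to 0$ as $R\to\infty$ by dominated convergence. Choose $R$ so that this tail is below $\delta^2$ for every $j$. Then, by the triangle inequality in $L^2(\{|y|\ge R\})$, we get
\begin{align*}
    \| v \|_{L^2(|y|\ge R)} + \|\nabla v\|_{L^2(|y|\ge R)} + \|\Delta v\|_{L^2(|y|\ge R)} \lesssim \delta .
\end{align*}
Squaring gives a bound $\lesssim \delta^2$ on the three quadratic tails.

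Next, I treat the $L^{p+1}$ tail. Under \eqref{eq:intercri} we have $H^2(\R^d)\hookrightarrow L^{p+1}(\R^d)$, since $p+1 < 2d/(d-4)_+$. The same approximation argument therefore works in $L^{p+1}(|y|\ge R)$. It uses $\|v - v_j\|_{L^{p+1}} \lesssim \|v-v_j\|_{H^2} < \delta$, together with enlarging $R$ so that each $\|v_j\|_{L^{p+1}(|y|\ge R)} < \delta$. This yields $\int_{|y|\ge R}|v|^{p+1} \lesssim \delta^{p+1}$. Choosing $\delta$ small in terms of $\eta$ and setting $C(\eta)=R$ finishes the proof.

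The only point needing any care is the $L^{p+1}$ term. Here one must remember that the Sobolev embedding applies on all of $\R^d$. Since $v - v_j$ is a globally defined $H^2$ function, we can simply bound its restriction to the exterior region by its global norm, so no cutoff or localized Sobolev inequality is needed.
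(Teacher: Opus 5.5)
Your proof is correct and complete: total boundedness of $\{u(t,\cdot+x(t))\}$ in $H^2(\R^d)$, the vanishing tails of finitely many fixed functions, and the global embedding $H^2(\R^d)\hookrightarrow L^{p+1}(\R^d)$ together give the uniform bound. The embedding is valid under \eqref{eq:intercri}, since $p+1<2d/(d-4)$ for $d\ge 5$ and $H^2$ embeds into every $L^q$ with $2\le q<\infty$ for $d\le 4$. The paper gives no argument of its own and simply defers to \cite{Komada}; your direct argument is the standard proof of this ``precompactness implies uniform tightness'' fact, equivalent to the usual contradiction version that extracts a subsequence $u(t_n,\cdot+x(t_n))\to\phi$ in $H^2(\R^d)$.
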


\begin{proof}
    See \cite{Komada}.
\end{proof}

Using the localized virial identity in the direction orthogonal to the momentum vector and Lemma \ref{lem:key}, we show the following rigidity theorem.

\begin{thm} \label{thm:rigidity}
    Let $d \ge 2$ and $p$ satisfy \eqref{eq:cond_p}.
    Let $u \in C(\R, H^2(\R^d))$ be a global solution to \eqref{eq:4NLS} with initial data $u_0$ satisfying \eqref{eq:below_gs}.
    If there exists $x(\cdot) \in C^1(\R,\R^d)$ such that $|\dot{x}(t)| \lesssim_u 1$ for all $t \in \R$ and $\{ u(t,\cdot + x(t)) \mid t \in \R \}$ is precompact in $H^2(\R^d)$, then $u \equiv 0$.
\end{thm}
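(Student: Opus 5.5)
We argue by contradiction: suppose $u \not\equiv 0$. By rotational invariance of \eqref{eq:4NLS} (which preserves $S_\omega$, $K$ and the precompactness modulo translation), we may assume that $P(u)$ is parallel to $e_1=(1,0,\dots,0)$. In particular, $P_\perp(u):=\int \im[\overline{u}\nabla_\perp u]\,dx = 0$.

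\textbf{Step 1: a uniform lower bound on $K_\perp$.} By Lemma \ref{lem:sign_K}, $S_\omega(u(t))<S_\omega(Q_\omega)$ and $K(u(t))\ge 0$ for all $t$. If $K(u(t_0))=0$ for some $t_0$, then \eqref{eq:alter_vc} forces $u(t_0)=0$, hence $u\equiv0$. So $K(u(t))>0$ for every $t$, and Lemma \ref{lem:key} gives $K_\perp(u(t))>0$ for every $t$.

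Next we upgrade this to $\inf_t K_\perp(u(t))=:\delta>0$ using precompactness. Suppose $K_\perp(u(t_n))\to0$. Since $K_\perp$ and $S_\omega$ are translation invariant, pass to a limit $u(t_n,\cdot+x(t_n))\to v$ in $H^2$. Then $S_\omega(v)=S_\omega(u)<S_\omega(Q_\omega)$ and $K_\perp(v)=0$. Moreover $M(v)=M(u)>0$, so $v\neq0$. We have $K(v)\ge0$. If $K(v)>0$, Lemma \ref{lem:key} gives $K_\perp(v)>0$, a contradiction. If $K(v)=0$, then \eqref{eq:alter_vc} gives $S_\omega(v)\ge S_\omega(Q_\omega)$, again a contradiction.

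\textbf{Step 2: the localized orthogonal virial.} Take a cutoff $\phi_R(x_\perp)=R^2\phi(x_\perp/R)$ with $\phi(y)=|y|^2/2$ for $|y|\le1$ and $\phi$ constant for $|y|\ge2$. Define
\[
V_R(t)=\int \nabla_\perp\phi_R\bigl(x_\perp-x_\perp(t)\bigr)\cdot\im[\overline{u}\nabla_\perp u]\,dx .
\]
Since $|\nabla_\perp\phi_R|\lesssim R$, we have $|V_R(t)|\lesssim R\|u\|_{L^\infty H^2}^2$. Differentiating in time gives two contributions.

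The first comes from the equation. A direct fourth-order virial computation, as in \cite{P10,PS}, gives $2K_\perp(u(t))$ on the region $|x_\perp-x_\perp(t)|\le R$, plus error terms supported where $|x_\perp-x_\perp(t)|\ge R$. These errors involve $|u|^2,|\nabla u|^2,|\Delta u|^2,|u|^{p+1}$ with coefficients given by derivatives of $\phi_R$ of order up to four, all bounded uniformly in $R\ge1$.

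The second is the drift term $-\dot x_\perp(t)\cdot\int \nabla_\perp^2\phi_R\,\im[\overline u\nabla_\perp u]\,dx$. On the region $|x_\perp-x_\perp(t)|\le R$ the Hessian is the identity, so this equals $-\dot x_\perp\cdot P_\perp(u)=0$ up to an error supported outside $R$. Since $|\dot x|\lesssim_u1$, that error is bounded by the exterior $H^1$ mass of $u$.

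Apply Lemma \ref{lem:tightness} with $\eta$ small and $R\ge C(\eta)$, using $|x_\perp-x_\perp(t)|\ge R \Rightarrow |x-x(t)|\ge R$. All errors are then $O(\eta)$, so
\[
\frac{d}{dt}V_R(t)\ge 2\delta - C\eta\ge\delta .
\]

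\textbf{Step 3: conclusion.} Integrating over $[0,T]$ gives $\delta T\le 2\sup_t|V_R(t)|\lesssim_u R$. This fails for $T$ large, so $u\equiv0$.

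The main obstacle is Step 2: verifying the fourth-order localized virial identity in the $\perp$-directions with a moving center. One must check that the interior terms reproduce exactly $2K_\perp$; the mixed term $2|\nabla_\perp\partial_1 u|^2$ appears because $\phi_R$ does not depend on $x_1$. The error terms must be controlled by the tightness estimate. The moving center is harmless only because the momentum has been rotated so that $P_\perp(u)=0$, which removes the term $\dot x\cdot P$ that obstructs the full virial argument.
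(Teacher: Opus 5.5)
Your proposal is correct and follows essentially the same route as the paper. Both rotate so that $P(u)\parallel e_1$, run a localized virial in the directions orthogonal to the momentum (where the drift term $\dot x_\perp\cdot P_\perp(u)$ vanishes), and close with Lemma \ref{lem:key} plus precompactness. The differences are only organizational: you use a cutoff depending on $x_\perp$ alone, whereas the paper uses the radial $\Psi_R$ and sums $V_{j,R}$ over $j=2,\dots,d$; and you prove $\inf_t K_\perp(u(t))>0$ by compactness before the virial step, whereas the paper first extracts $t_n$ with $K_\perp(u(t_n))\to0$ from the time average (taking $R=T^{1/2}$) and then applies the same compactness and Lemma \ref{lem:key} argument to the limit profile.
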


\begin{proof}
    Since $u_0$ satisfies \eqref{eq:below_gs}, from Lemma \ref{lem:sign_K} we see that $S_{\omega}(u) < S_{\omega}(Q_{\omega})$ for some $\omega > 0$ and $K(u(t)) \ge 0$ for all $t \in \R$.
    Moreover, $\|u\|_{L_t^{\infty} H_x^2(\R \times \R^d)} < \infty$.
    From \eqref{eq:alter_vc} and Lemma \ref{lem:key}, we have $K_{\perp}(u(t)) \ge 0$ for all $t \in \R$.
    
    By rotational symmetry and momentum conservation of \eqref{eq:4NLS}, without loss of generality, we may assume that the momentum vector $ P(u) $ is always parallel to the first vector $ e_1 = (1,0,...,0) \in \R^d $, namely, 
    \begin{align*} 
        P_j(u(t)) = \int_{\R^d} \im [\overline{u}(t,x) \pt_j u(t,x)] dx
        = 0, \ \ \ j=2,...,d
    \end{align*}
    for all $t \in \R$. 

    Let $ \psi \in C_0^{\infty}([0,\infty)) $ be a smooth, non-increasing function such that $ \psi(s) = 1 $ for $ 0 \le s \le 1 $ and $ \psi(s) = 0 $ for $ s \ge 2 $.
    For each $ R > 0 $, we define
    \begin{align*}
        \Psi_R (x) :=
        R^2 \int_0^{\frac{|x|}{R}} \int_0^r \psi (s) ds dr.
    \end{align*}
    Then for all $ m \ge 1 $, we have
    \begin{align} \label{eq:2-m}
        \| \nabla^{m} \Psi_R \|_{L^{\infty}}
        \lesssim R^{2-m}.
    \end{align}
    For $1 \le j,k \le d$, the second derivatives of $\Psi_R$ are
    \begin{align} 
        & \pt_j \pt_k \Psi_R (x) \notag \\
        = & \ \frac{R}{|x|} \left( \delta_{jk} - \frac{x_j x_k}{|x|^2} \right) \int_0^{\frac{|x|}{R}} \psi (s) ds
        + \frac{x_j x_k}{|x|^2} \psi \left( \frac{|x|}{R} \right) \notag \\
        = & \ \delta_{jk} 
        + \delta_{jk} \frac{R}{|x|} \int_0^{\frac{|x|}{R}} \left( \psi (s) - 1 \right) ds
        + \frac{x_j x_k}{|x|^2} \frac{R}{|x|} \int_0^{\frac{|x|}{R}} \left( \psi \left( \frac{|x|}{R} \right) -  \psi (s) \right) ds. \label{eq:Psi_R}
    \end{align}
    Since $ \psi (s) = 1 $ for $ 0 \le s \le 1 $, we see that
    \begin{align} \label{eq:<R}
        \int_0^{\frac{|x|}{R}} \left( \psi (s) - 1 \right) ds
        =  
        \int_0^{\frac{|x|}{R}} \left( \psi \left( \frac{|x|}{R} \right) - \psi (s) \right) ds
        = 0 \ \ \ \mbox{if} \ |x| \le R.
    \end{align}
    Put $ z = x - x(t) $ and let us define an operator $Z_j$ by
    \begin{align*}
        Z_j f := & \ \pt_j \Psi_R (z) \pt_j f
        + \pt_j ( \pt_j \Psi_R (z) f ) \\
        = & \ 2 \pt_j \Psi_R (z) \pt_j f
        + \pt_j^2 \Psi_R (z) f.
    \end{align*}
    For $ R > 0 $, we define
    \begin{align*}
        V_{j,R} (t)
        := & \ \frac12 \langle i u(t) | Z_j u(t) \rangle \\
        = & \ \int_{\R^d} \pt_j \Psi_R (z)
        \cdot \im [\overline{u} (t,x) \pt_j u(t,x)] dx,
    \end{align*}
    where $ \langle f | g \rangle := \re \int_{\R^d} f(x) \overline{g}(x) dx $.
    Then
    \begin{align*}
        |V_{j,R}(t)|
        \lesssim & \ R \|u(t)\|_{L^2(\R^d)} \|\nabla u(t)\|_{L^2(\R^d)} \\
        \lesssim & \ R \|u(t)\|_{L^2(\R^d)}^{\frac32} \|\Delta u(t)\|_{L^2(\R^d)}^{\frac12}
    \end{align*}
    for all $ t \in \R $.
    Since $ Z_j^{*} = - Z_j $, we have
    \begin{align} \label{eq:d/dt}
        \frac{d}{d t} V_R^j (t)
        = \langle i u_t | Z_j u \rangle
        + \int_{\R^d} 
        (- \dot{x}_k (t)) \pt_k \pt_j \Psi_R (z)
        \im [\overline{u} \pt_j u] dx.
    \end{align}
    Since $ u $ is a solution to \eqref{eq:4NLS}, using $ Z_j^{*} = - Z_j $, we have
    \begin{align*}
        \langle i u_t | Z_j u \rangle
        = & \ \langle - \mu \Delta u + \Delta^2 u - |u|^{p-1} u | Z_j u \rangle \notag \\
        = & \ - \frac12 \mu \langle [\Delta, Z_j] u | u \rangle
        + \frac12 \langle [\Delta^2, Z_j] u | u \rangle
        - \frac12 \langle [|u|^{p-1}, Z_j] u | u \rangle, 
    \end{align*}
    where $ [A, B] = A B - B A $. From the definition of $ Z_j $, we have 
    \begin{align*}
        [\Delta, Z_j]
        = 2 \pt_k ( \pt_k \pt_j \Psi_R (z) ) \pt_j
        + 2 \pt_j (\pt_j \pt_k \Psi_R (z) ) \pt_k
        + \pt_j^2 \Delta \Psi_R (z).
    \end{align*}
    Using the identity $ \Delta A + A \Delta = 2 \pt_l A \pt_l + [\pt_l, [\pt_l, A]] $ for an operator $ A $, we have
    \begin{align*}
        [\Delta^2, Z_j]
        = & \ \Delta [\Delta, Z_j] + [\Delta, Z_j] \Delta \\
        = & \ 2 \pt_l [\Delta, Z_j] \pt_l 
        +[\pt_l, [\pt_l, [\Delta, Z_j]]] \notag \\
        = & \ 4 \pt_l \pt_k (\pt_k \pt_j \Psi_R (z)) \pt_j \pt_l
        + 4 \pt_l \pt_j (\pt_j \pt_k \Psi_R (z)) \pt_k \pt_l 
        + 2 \pt_l (\pt_j^2 \Delta \Psi_R (z)) \pt_l \notag \\
        & \ + 2 \pt_k (\pt_k \pt_j \Delta \Psi_R (z)) \pt_j
        + 2 \pt_j (\pt_j \pt_k \Delta \Psi_R (z)) \pt_k
        + \pt_j^2 \Delta^2 \Psi_R (z). \notag
    \end{align*}
    Using the above commutator formulas, we integrate by parts to obtain
    \begin{align}
        & \ - \frac12 \mu \langle [\Delta, Z_j] u | u \rangle
        + \frac12 \langle [\Delta^2, Z_j] u | u \rangle \notag \\
        = & \ 2 \mu \int_{\R^d} \pt_j \pt_k \Psi_R(z) \re [\pt_j \overline{u} \pt_k u] dx
        + 4 \int_{\R^d} \pt_j \pt_k \Psi_R (z) \re [\pt_l \pt_j \overline{u} \pt_k \pt_l u] dx 
        \label {eq:L1} \\
        & \ - \frac12 \mu \int_{\R^d} \pt_j^2 \Delta \Psi_R(z) |u|^2 dx
        - \int_{\R^d} \pt_j^2 \Delta \Psi_R (z) |\nabla u|^2 dx \label{eq:L2} \\
        & \ - 2 \int_{\R^d} \pt_j \pt_k \Delta \Psi_R (z) \re [\pt_j \overline{u} \pt_k u] dx  
        + \frac12 \int_{\R^d} \pt_j^2 \Delta^2 \Psi_R (z) |u|^2 dx.
        \label{eq:L3}
    \end{align}
    Using \eqref{eq:Psi_R} and \eqref{eq:<R}, we have
    \begin{align*}
        \eqref{eq:L1}
        = & \ 2 \mu \int_{\R^d} |\pt_j u|^2 dx
        + 4 \int_{\R^d} |\nabla \pt_j u|^2 dx
        + A_R^1(t),
    \end{align*}
    where
    \begin{align*}
        |A_R^1(t)| \lesssim 
        \int_{|x-x(t)| \ge R} \left(
        \mu |\nabla u(t,x)|^2 + |\Delta u(t,x)|^2
        \right) dx.
    \end{align*}
    Using \eqref{eq:2-m}, we have
    \begin{align*}
        | \eqref{eq:L2} + \eqref{eq:L3} |
        \lesssim R^{-2} \|\nabla u\|_{L_t^{\infty} L_x^2 (\R \times \R^d)}^2
        + \left( \mu R^{-2} + R^{-4} \right) M(u).
    \end{align*}
    Using $ [|u|^{p-1}, Z_j] = - 2 \pt_j \Psi_R (z) \pt_j (|u|^{p-1}) $, we integrate by parts to obtain
    \begin{align} 
        - \frac12 \langle [|u|^{p-1}, Z_j] u | u \rangle
        = & \ \int_{\R^d} |u|^2 \pt_j \Psi_R (z) \pt_j (|u|^{p-1}) dx \notag \\
        = & \ \frac{p-1}{p+1} \int_{\R^d} \pt_j \Psi_R (z) \pt_j (|u|^{p+1}) dx \notag \\
        = & \ - \frac{p-1}{p+1} \int_{\R^d} \pt_j^2 \Psi_R (z) |u|^{p+1} dx. \label{eq:N}
    \end{align}
    Using \eqref{eq:Psi_R} and \eqref{eq:<R}, we have
    \begin{align*}
        \eqref{eq:N}
        = - \frac{p-1}{p+1} \int_{\R^d} |u|^{p+1} dx
        +A_R^2(t),
    \end{align*}
    where
    \begin{align*}
        |A_R^2(t)| \lesssim
        \int_{|x-x(t)| \ge R} |u(t,x)|^{p+1} dx.
    \end{align*}
    Therefore, applying Lemma \ref{lem:tightness} we obtain
    \begin{align} \label{eq:virial_j}
        \langle i u_t | Z_j u \rangle
        = \int_{\R^d} \left( 
        2 \mu |\pt_j u|^2
        + 4 |\nabla \pt_j u|^2 
        - \frac{p-1}{p+1} |u|^{p+1} 
        \right) dx
        + o_R(1),
    \end{align}
    where $o_R(1) \to 0$ as $R \to \infty$ uniformly in $t \in \R$.
    Using \eqref{eq:Psi_R}, we have
    \begin{align*}
        - \dot{x}_k(t) \int_{\R^d} \pt_k \pt_j \Psi_R (z) \im [\overline{u} \pt_j u] dx
        = - \dot{x}_j(t) P_j(u) + A_R^3(t),
    \end{align*}
    where
    \begin{align*}
        |A_R^3(t)| \lesssim
        |\dot{x}(t)| 
        \|\nabla u\|_{L_t^{\infty} L_x^2 (\R \times \R^d)} 
        \left( \int_{|x-x(t)| \ge R} |u(t,x)|^2 dx \right)^{\frac12}.
    \end{align*}
    Applying Lemma \ref{lem:tightness} we have
    \begin{align} \label{eq:center}
        - \dot{x}_k(t) \int_{\R^d} \pt_k \pt_j \Psi_R (z) \im [\overline{u} \pt_j u] dx
        = - \dot{x}_j(t) P_j(u) + o_R(1),
    \end{align}
    where $o_R(1) \to 0$ as $R \to \infty$ uniformly in $t \in \R$.
    Therefore, from \eqref{eq:d/dt}, \eqref{eq:virial_j} and \eqref{eq:center} we obtain
    \begin{align} \label{eq:virial_id_j}
        \frac{d}{d t} V_{j,R}(t)
        = \int_{\R^d} \left(
        2 \mu |\pt_j u|^2 + 4 |\nabla \pt_j u|^2 
        - \frac{p-1}{p+1} |u|^{p+1} 
        \right) dx
        - \dot{x}_j(t) P_j(u) + o_R(1),
    \end{align}
    where $o_R(1) \to 0$ as $R \to \infty$ uniformly in $t \in \R$.
    
    Let us define
    \begin{align*}
        V_{\perp, R}(t)
        := \sum_{j=2}^{d} V_{j, R}(t).
    \end{align*}
    Then
    \begin{align*}
        \sup_{t \in \R} |V_{\perp,R}(t)|
        \lesssim R M(u)^{\frac34} \|u\|_{L_t^{\infty} \dot{H}_x^2 (\R \times \R^d)}^{\frac12}.
    \end{align*}
    Since $P_j(u) = 0$ for $j=2,...,d$, from \eqref{eq:virial_id_j}, we obtain
    \begin{align*} 
        \frac{d}{d t} V_{\perp, R} (t)
        = 2 K_{\perp} (u(t)) + o_R(1).
    \end{align*}
    Integrating this on $[0,T]$, we have
    \begin{align} \label{eq:time_average}
        \frac{1}{T} \int_0^T K_{\perp}(u(t)) dt
        \lesssim_u \frac{R}{T} + o_R(1).
    \end{align}
    Taking $ R = T^{\frac12} $ and letting $ T \to \infty $, we see that the right-hand side of \eqref{eq:time_average} can be arbitrarily small. 
    This implies that we can take $ t_n \to \infty $ such that $ K_{\perp}(u(t_n)) \to 0 $.
    By the precompactness of $ \{ u(t, \cdot + x(t)) \mid t \in \R \} $, there exists $ \phi \in H^2(\R^d) $ such that $ u(t_n, \cdot + x(t_n)) $ converges to $ \phi $ in $ H^2(\R^d) $, passing to a subsequence if necessary. 
    By the strong convergence and conservation laws, we see that $S_{\omega}(\phi) = S_{\omega}(u) < S_{\omega}(Q_{\omega})$, $K(\phi) \ge 0$ and $K_{\perp}(\phi) = 0$. 
    Hence, by Lemma \ref{lem:key}, we have $ K(\phi) = 0 $, which implies $\phi = 0$ from \eqref{eq:alter_vc}. 
    Therefore, $M(u) = M(\phi) = 0$ and so $u \equiv 0$.
\end{proof}

\begin{proof}[Proof of Theorem \ref{thm:main}]
    It suffices to show that $L_{\omega}^{*} \ge S_{\omega}(Q_{\omega})$ holds for all $\omega > 0$.
    Fix $\omega > 0$ and suppose $L_{\omega}^{*} < S_{\omega}(Q_{\omega})$ to argue by contradiction.
    Then, from Lemma \ref{lem:critical_sol} there exists a critical solution $u$ to \eqref{eq:4NLS} such that $S_{\omega}(u) = L_{\omega}^{*}$ and $K(u(0)) \ge 0$.
    Moreover, there exists $x(\cdot) \in C^1(\R,\R^d)$ such that $|\dot{x}(t)| \lesssim 1$ for all $t \in \R$ and $\{ u(t,\cdot + x(t)) \mid t \in \R \}$ is precompact in $H^2(\R^d)$.
    Applying Theorem \ref{thm:rigidity}, we see that $u \equiv 0$.
    This contradicts $S_{\omega}(u) = L_{\omega}^{*} > 0$.
\end{proof}

\appendix 
\renewcommand{\theequation}{A.\arabic{equation}} 
\setcounter{equation}{0} 
\section{Characterization of ground state by $K_{\perp}$} \label{sec:ap}

\begin{lem} \label{lem:ap}
    Let $d \ge 2$ and $p$ satisfy \eqref{eq:cond_p}.
    For all $\omega > 0$, $S_{\omega}(Q_{\omega}) = m_{\omega, \perp}$ where
    \begin{align*}
        m_{\omega, \perp}
        := \inf \{ S_{\omega}(f) \mid f \in H^2(\R^d) \setminus \{0\}, K_{\perp}(f) = 0 \}.
    \end{align*}
\end{lem}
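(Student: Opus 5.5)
We prove the two inequalities $m_{\omega,\perp} \ge S_{\omega}(Q_{\omega})$ and $m_{\omega,\perp} \le S_{\omega}(Q_{\omega})$ separately.

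\emph{Lower bound.} Let $f \in H^2(\R^d)\setminus\{0\}$ with $K_{\perp}(f)=0$; we show $S_{\omega}(f) \ge S_{\omega}(Q_{\omega})$, splitting on the sign of $K(f)$.
\begin{itemize}
\item If $K(f) \le 0$: when $K(f)=0$ this is exactly \eqref{eq:alter_vc}. When $K(f)<0$, we use the standard fact that the infimum in \eqref{eq:alter_vc} equals $\inf\{ S_{\omega}(f) - \frac{2}{d(p-1)}K(f) \mid f \ne 0,\ K(f) \le 0\}$. This holds because the functional $S_{\omega} - \frac{2}{d(p-1)}K$ is a positive combination of $\|f\|_{L^2}^2$, $\|\nabla f\|_{L^2}^2$, $\|\Delta f\|_{L^2}^2$ (as in the proof of Lemma \ref{lem:sign_K}), and it decreases along $T^{\lambda}$ for $\lambda<1$. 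Moving $\lambda$ down to the point where $K(T^{\lambda}f)=0$ gives $S_{\omega}(Q_{\omega}) \le S_{\omega}(f) - \frac{2}{d(p-1)}K(f) \le S_{\omega}(f)$ whenever $K(f) \le 0$.
\item If $K(f)>0$: the proof of Lemma \ref{lem:key} was by contradiction and used only $K_{\perp}(f) \le 0$ and $K(f)>0$. It produces $\lambda_0>1$ with $K(T_{\perp}^{\lambda_0}f)=0$ and $S_{\omega}(T_{\perp}^{\lambda_0}f) \le S_{\omega}(f)$. Hence $S_{\omega}(f) \ge S_{\omega}(Q_{\omega})$ by \eqref{eq:alter_vc}.
\end{itemize}
This step is where $p \ge 1+8/(d-1)$ is used, through the monotonicity of $\lambda \mapsto S_{\omega}(T_{\perp}^{\lambda}f)$ for $\lambda \ge 1$.

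\emph{Upper bound.} We test with $Q_{\omega}$ itself. Since $Q_{\omega}$ solves \eqref{eq:SW}, it satisfies the Pohozaev-type identities $\frac{\partial}{\partial\lambda}S_{\omega}(\Phi^{\lambda}Q_{\omega})|_{\lambda=1}=0$ for any family of $L^2$-preserving dilations $\Phi^{\lambda}$. This holds because $S_{\omega}'(Q_{\omega})=0$ and $\lambda \mapsto \Phi^\lambda Q_\omega$ is a $C^1$ curve in $H^2$. Justifying differentiability needs $Q_{\omega}$ to be smooth with some decay, or $x\cdot\nabla Q_\omega \in H^2$, which follows from the regularity and decay in \cite{BCGJ,BN}. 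Applying this with $\Phi^{\lambda}=T_{\perp}^{\lambda}$ gives $K_{\perp}(Q_{\omega})=0$, so $m_{\omega,\perp} \le S_{\omega}(Q_{\omega})$.

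An alternative that avoids decay issues uses a ground state that is radial (available by \cite{BCGJ,BN}, or by rearrangement on the Fourier side). For radial $Q$ the coordinate dilations give each direction an identical contribution. Summing the $x_j$-directional identities gives $K$, and by symmetry each directional functional equals $K(Q)/d=0$. Then $K_{\perp}(Q)=K(Q)-K_{1}(Q)=0$, where $K_1(f)=K(f)-K_{\perp}(f)$ is the $x_1$-directional functional appearing in the definition of $K_\perp$.

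\emph{Main obstacle.} The main obstacle is the rigorous justification of $K_{\perp}(Q_{\omega})=0$. The plan is to use either the regularity of $Q_{\omega}$ directly, or the radial symmetry of some minimizer of \eqref{eq:alter_vc} so that $K_{\perp}=\frac{d-1}{d}K$ on it.
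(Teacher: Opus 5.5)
There is a genuine gap in your lower bound, in the subcase $K(f)<0$. In the chain $S_{\omega}(Q_{\omega}) \le S_{\omega}(f)-\frac{2}{d(p-1)}K(f) \le S_{\omega}(f)$ the second inequality goes the wrong way. When $K(f)<0$ the term $-\frac{2}{d(p-1)}K(f)$ is positive, so you have only bounded from below a quantity that is \emph{larger} than $S_{\omega}(f)$.

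This cannot be patched inside your argument, because it never uses the constraint $K_{\perp}(f)=0$, and without that constraint the conclusion is false. For example, $f=cQ_{\omega}$ with $c$ large has $K(f)<0$, while $S_{\omega}(cQ_{\omega})\to-\infty$ as $c\to\infty$. Nor is the subcase vacuous. On $\{K_{\perp}=0\}$ one has
\[
K(f)=\mu\|\pt_1 f\|_{L^2}^2+2\|\nabla\pt_1 f\|_{L^2}^2-\frac{1}{d-1}\bigl(\mu\|\nabla_{\perp} f\|_{L^2}^2+2\|\nabla_{\perp}\pt_1 f\|_{L^2}^2+2\|\Delta_{\perp} f\|_{L^2}^2\bigr).
\]
So if you stretch any $g\neq0$ in the $x_1$-direction and then adjust its amplitude so that $K_{\perp}=0$, you get an $f$ with $K_{\perp}(f)=0$ and $K(f)<0$.

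The missing idea is that $\lambda\mapsto S_{\omega}(T_{\perp}^{\lambda}f)$ is also monotone on $(0,1]$, not only on $[1,\infty)$. Set $\gamma:=\frac{(d-1)(p-1)}{2}\ge 4$. For $0<\lambda\le1$ we have $\lambda^{2}\ge\lambda^{4}\ge\lambda^{\gamma}$, so $\lambda\pt_\lambda S_{\omega}(T_{\perp}^{\lambda}f)\ge K_{\perp}(f)\lambda^{\gamma}=0$. Hence $S_{\omega}(T_{\perp}^{\lambda}f)\le S_{\omega}(f)$ for $0<\lambda\le1$. Moreover $K(T_{\perp}^{\lambda}f)\to\mu\|\pt_1 f\|_{L^2}^2+2\|\pt_1^2 f\|_{L^2}^2$ as $\lambda\to0$, and this limit is positive since $f\neq0$. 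So if $K(f)<0$, there is $\lambda_0\in(0,1)$ with $K(T_{\perp}^{\lambda_0}f)=0$, and \eqref{eq:alter_vc} gives $S_{\omega}(Q_{\omega})\le S_{\omega}(T_{\perp}^{\lambda_0}f)\le S_{\omega}(f)$.

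Combined with your $K(f)>0$ case, which is correct via the proof of Lemma~\ref{lem:key}, this is exactly the paper's argument. The paper avoids the case split: $S_{\omega}(T_{\perp}^{\lambda}f)$ is maximized at $\lambda=1$, and $K(T_{\perp}^{\lambda}f)$ is positive near $\lambda=0$ and tends to $-\infty$ as $\lambda\to\infty$, so it vanishes at some $\lambda_0\in(0,\infty)$. Note that the hypothesis $p\ge 1+8/(d-1)$ is thus used on both sides of $\lambda=1$, not only for $\lambda\ge1$ as you stated.

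Your upper bound, the Pohozaev identity for the dilation $T_{\perp}^{\lambda}$, is the same as the paper's, which integrates \eqref{eq:SW} against $(x_{\perp}\cdot\nabla_{\perp}+\frac{d-1}{2})\phi$. Do not rely on the radial alternative. It presupposes a radial minimizer of \eqref{eq:alter_vc}, and that is not a standard fact for general $p$, since Schwarz symmetrization does not control $\|\Delta f\|_{L^2}$.
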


\begin{proof}
    Integrating \eqref{eq:SW} multiplied by $\left( x_{\perp} \cdot \nabla_{\perp} + \frac{d-1}{2} \right) \phi$, we have $K_{\perp}(\phi) = 0$.
    Since $Q_{\omega}$ solves \eqref{eq:SW}, $K_{\perp}(Q_{\omega}) = 0$ and so $m_{\omega, \perp} \le S_{\omega}(Q_{\omega})$ for all $\omega > 0$.
    Therefore, it suffices to show $m_{\omega, \perp} \ge S_{\omega}(Q_{\omega})$.
    Suppose that $f \in H^2(\R^d) \setminus \{0\}$ satisfies $K_{\perp}(f) = 0$.
    Then, since $p \ge 1 + 8/(d-1)$, we have
    \begin{align*}
        \lambda \frac{\pt}{\pt \lambda} S_{\omega}(T_{\perp}^{\lambda} f)
        = & \ \mu \|\nabla_{\perp} f\|_{L^2(\R^d)}^2 \lambda^2
        + 2 \|\nabla_{\perp} \pt_1 f\|_{L^2(\R^d)}^2 \lambda^2
        + 2 \|\Delta_{\perp} f\|_{L^2(\R^d)}^2 \lambda^4 \\
        & \ - \frac{(d-1)(p-1)}{2(p+1)} \|f\|_{L^{p+1}(\R^d)}^{p+1} \lambda^{\frac{(d-1)(p-1)}{2}} \\
        \le & \ K_{\perp}(f) \lambda^4 = 0
    \end{align*}
    for all $\lambda \ge 1$.
    On the other hand, 
    \begin{align*}
        \lambda \frac{\pt}{\pt \lambda} S_{\omega}(T_{\perp}^{\lambda} f)
        \ge K_{\perp}(f) \lambda^{\frac{(d-1)(p-1)}{2}}
        = 0
    \end{align*}
    for all $0 < \lambda \le 1$.
    Therefore, $S_{\omega}(T_{\perp}^{\lambda} f)$ achieves its maximum at $\lambda = 1$.
    Since $p \ge 1 +8/(d-1)$, we have \eqref{eq:K(perp)} for all $\lambda \ge 1$.
    Hence, since $K_{\perp}(f) = 0$, $K(T_{\perp}^{\lambda} f) \to - \infty$ as $\lambda \to \infty$.
    On the other hand, $K(T_{\perp}^{\lambda} f) \to \mu \|\pt_1 f\|_{L^2(\R^d)}^2 + 2 \|\pt_1^2 f\|_{L^2(\R^d)}^2 > 0$ as $\lambda \to 0$.
    Therefore, there exists $\lambda_0 \in (0,\infty)$ such that $K(T_{\perp}^{\lambda_0} f) = 0$.
    From \eqref{eq:alter_vc} we have
    \begin{align*}
        S_{\omega}(Q_{\omega})
        \le S_{\omega}(T_{\perp}^{\lambda_0} f)
        \le S_{\omega}(T_{\perp}^{1} f)
        = S_{\omega}(f).
    \end{align*}
    This implies $S_{\omega}(Q_{\omega}) \le m_{\omega, \perp}$.
\end{proof}

\subsection*{Data availability}
There is no available data related to this work.

\subsection*{Conflict of Interest}
The authors declare that there are no conflicts of interest.

\subsection*{Acknowledgement}
The first author is supported by KAKENHI Grant-in-Aid for Early-Career Scientists No. JP24K16947 and partially by KAKENHI Grant-in-Aid for Scientific Research (B) No. JP26K00612.
The second author is supported by KAKENHI Grant-in-Aid for Early-Career Scientists No. JP23K13003 and partially by JST [Moonshot R\&D][Grant Number JPMJMS25A5].



\end{document}